\newtheorem{defi}{Definition}
\newtheorem{thm}{Theorem}
\newtheorem{lem}{Lemma}
\newtheorem{asu}{Assumption}
\begin{document}

\nocite{*}
\title{Enstrophy Cascade in Physical Scales \\for the 3D Navier-Stokes Equations}

\author{Keith Leitmeyer}

\maketitle

\begin{abstract}
 An enstrophy cascade is exhibited for the Navier-Stokes equations in physical scales independently of boundary conditions under physically reasonable assumptions on the flow.
\end{abstract}

\section{Introduction}

The Navier-Stokes equations $u_t+(u\cdot \nabla) u = \Delta u -\nabla p, \,\text{div }u=0$, where $u$ is the velocity vector and $p$ is the scalar pressure, model the dynamics of an incompressible,
viscous fluid.  However, the physical theory of turbulence is based mostly on experimental observations 
and heuristic arguments.  Two of the main features of Kolmogorov's K41 phenomenology are existence of the energy cascade and scale-locality of the energy flux in 3D.
The first rigorous mathematical result in favor of existence of the energy cascade 
was obtained in \cite{FMRT01} in the setting of ``stationary statistical solutions'', and the cascade
took place in the wavenumbers. A related work \cite{FJMR} presented the proofs of existence
of the enstrophy cascade and inverse energy cascade in 2D within the same mathematical framework
(see also \cite{R}). This approach concerns statistically steady-state turbulence, feeding off
a non-trivial external force. Mathematical arguments confirming the scale-locality of the flux
were presented in \cite{LF92, E05, EA09}, and \cite{CCFS08} which features
a rigorous proof of quasi-locality in the Littlewood-Paley setting. 

At least since G.I. Taylor's fundamental 1937 paper, \emph{Production and dissipation of vorticity in a turbulent fluid} (\cite{Tay37}), vortex stretching has been considered as a primary physical mechanism for
creation of small scales in turbulent flows (cf. \cite{Gr14} for an overview of current efforts to
establish rigorous mathematical framework for the vortex-stretching--anisotropic diffusion 
narrative). The tendency of a turbulent flow to self-organize in
coherent vortex structures, most notably in vortex filaments, has been well-documented in
direct numerical simulations, see, e.g., \cite{SJO91, VM94, JWSR93}.
Strong local anisotropy and quasi low-dimensionality of turbulent flows in
the vorticity description points to plausibility of existence of 3D enstrophy cascade. However,
in order to efficiently exploit depletion of the nonlinearity caused by the local anisotropy, and
in particular, by local coherence of the vorticity direction (cf. \cite{Co94}), it is necessary
to be able to formulate dynamics of the turbulent cascades directly in the physical scales
of the flow. 

A spatial multi-scale ensemble-averaging process designed to detect significant sign-fluctuations of
a density in view, at a given physical scale, was introduced in \cite{DG1}.
This was then utilized, taking the density to be the energy flux-density, to obtain a proof of existence
of energy cascade and scale-locality of the flux in physical scales of 3D turbulent flows,
in the case of free (decaying) turbulence.
A subsequent work \cite{DG2} presented a proof of existence of 3D enstrophy cascade in a cylinder in space-time under the assumptions on the geometry of the flow and
smallness of a Kraichnan-type micro-scale, as well as several technical assumptions.
One technical assumption, that the enstrophy -- localized to the macro-scale domain of interest --
was smaller than a given constant, was (as pointed by the authors) less than satisfactory, both from mathematical and physical
viewpoints.

In what follows, we make modifications to the proof in order to replace 
this assumption with a much weaker assumption that can be thought of as 
a bit more restrictive version of the Kraichnan-type condition, and is consistent
with expected spatial complexity and uniform locality of fully developed turbulent flows.

The key new ingredients are redesigning the ensembles of test functions in the construction
of the ensemble averages in order to establish an explicit relation between the ensemble
averages at two different scales, and formulating uniform locality in
terms of Morrey-type quantities.

The main theorem states that the ensemble average of enstrophy fluxes within a range of scales is comparable to the modified mean palinstrophy (in particular it is positive).  In the first section the ensemble framework is presented.  Next, we state all of the assumptions on the flow used to prove
existence of the enstrophy cascade.  Then the proof of the main result is given.

\section{Ensemble Average Framework}
The inward enstrophy flux through a sphere is given by $-\int_{\partial B}\frac{1}{2}|\omega|^2 u\cdot n \,d\sigma$, where $n$ is the outward unit normal vector to the surface and the enstrophy is the $L^2$ norm of the vorticity $\omega= \text{curl } u$.  By Stokes' theorem, $-\int_{\partial B}\frac{1}{2}|\omega|^2 u\cdot n \,d\sigma=\int_{B} (u\cdot \nabla) \omega\cdot\omega \,dx$.  Instead of the sharp cutoff at the boundary of $B$, we need something smooth.  We will be working with the quantities $\int (u\cdot\nabla)\omega\cdot\psi\omega\,dx=-\int \frac{1}{2} |\omega|^2 u\cdot \nabla\psi \,dx$, where $\psi$ is a smooth function equal to 1 on $B$, supported on $2B$, and with inward pointing gradient, with certain bounds on its derivatives.

\begin{defi}[Refined test function]A $(C_0,\rho)$ test function at scale $R$ is any $\psi\in C^\infty(\mathbb{R}^3)$ supported in a ball of radius $2R$ with $0\leq\psi\leq 1$, $|\nabla \psi|<\frac{C_0}{R} \psi^\rho$, and $|\Delta\psi|<\frac{C_0}{R^2} \psi^{2\rho-1}$. \end{defi}

\begin{defi}[Ensemble] (Parameters $R, C_0, \rho, K_1, K_2, \psi_{0,R_0}$)
Fix a test function $\psi_0=\psi_{0,R_0}$.  The region of interest is $B(0,2R_0)$.

An ensemble is a collection of $(C_0,\rho)$ scale $R$ test functions $\{\psi_i\}_{i=1}^n$ satisfying the following properties:
\begin{enumerate}
\item $\psi_i \leq \psi_0 \leq \sum \psi_i$
\item $(R_0/R)^3\leq n\leq K_1 (R_0/R)^3$
\item No point of $B(2R_0,0)$ is contained in more than $K_2$ of the supports of $\psi_i$.
\end{enumerate}
\end{defi}
Choosing larger $K_1$, $K_2$ allows ensembles with greater global and local multiplicity respectively.

\begin{defi}[Ensemble average]
For a function $f$, denote by $\langle F \rangle _R$ the ensemble average $\frac{1}{n}\sum_{i=1}^n\frac{1}{R^3} \int f\psi_{i,R}\,dx$, and $F_0=\frac{1}{R_0^3}\int f \psi_0 \,dx$. 
\end{defi}
  Property 1 above is needed to compare $\langle F\rangle_R$ to $F_0$. Note that test functions near the boundary of the support of $\psi_0$ will have small integrals (due to Property 1), effectively skewing the ensemble average towards zero.  Larger $K_1, K_2$ allow ensembles that have higher weight on functions away from the boundary, making the skewing insignificant.

These ensemble averages can be viewed as a way to detect whether a function is significantly negative at some spatial scale.  If every ensemble average $\langle F \rangle _R$ (for fixed parameters) is positive, no matter how one arranges and stacks the test functions, then the function is not significantly negative at scale $R$.  Increasing $K_1$ and $K_2$ lowers the threshold for a function to be considered significantly negative.

Many ensembles can be constructed by applying Lemma 2 to $\psi_0$ and varying the multiplicity of the resulting functions (Assume $\psi_0$ satisfies the stronger $C_0'$-bounds to get an ensemble with $C_0$-bounds).

The following lemma states that ensemble averages (at any scale) of positive functions are comparable to the large scale mean.  The proof immediately follows from the definitions.

\begin{lem} If $f\geq 0$ then $\frac{1}{K_1}F_0 \leq \langle F\rangle_R \leq K_2 F_0$.  For slightly modified ensemble averages, we have $\frac{1}{n}\sum_1^n \frac{1}{R^3} \int f\psi_{i,R}^\delta\, dx \leq K_2\frac{1}{R_0^3} \int f\psi_0^\delta\, dx \;(\delta>0)$.\qed 
\end{lem}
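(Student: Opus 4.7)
The proof is essentially a direct verification from the definitions, relying on: (i) the pointwise comparisons in Property~1 of an ensemble, (ii) the covering multiplicity in Property~3, and (iii) the cardinality bounds in Property~2. The whole argument reduces to integrating $f\geq 0$ against the two pointwise inequalities linking $\psi_0$ and the $\psi_i$, and then converting between the normalising factors $R_0^3$ and $nR^3$.

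For the lower bound, the plan is to start from Property~1 in the form $\psi_0 \leq \sum_{i=1}^n \psi_i$. Multiplying by the nonnegative $f$ and integrating gives $\int f\psi_0\,dx \leq \sum_i \int f\psi_i\,dx$, i.e.\ $R_0^3 F_0 \leq nR^3 \langle F\rangle_R$. Combined with $n\leq K_1(R_0/R)^3$ from Property~2, this yields $F_0 \leq K_1 \langle F\rangle_R$, which is the desired lower bound.

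For the upper bound, I would use the other half of Property~1, $\psi_i \leq \psi_0$, together with the local multiplicity bound of Property~3, which says that at each $x\in B(0,2R_0)$ at most $K_2$ of the $\psi_i$'s are nonzero. Combining these gives the pointwise bound $\sum_i \psi_i \leq K_2 \psi_0$. Integrating against $f\geq 0$ gives $\sum_i \int f\psi_i\,dx \leq K_2 \int f\psi_0\,dx$, i.e.\ $nR^3 \langle F\rangle_R \leq K_2 R_0^3 F_0$. The lower bound $n\geq (R_0/R)^3$ in Property~2 then forces $\langle F\rangle_R \leq K_2 F_0$.

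The ``slightly modified'' inequality follows by the same route after the preliminary observation that since each test function satisfies $0\leq \psi_i \leq 1$ and $0 \leq \psi_0 \leq 1$, the map $t\mapsto t^\delta$ is monotone nondecreasing on $[0,1]$ for every $\delta>0$, so $\psi_i \leq \psi_0$ upgrades to $\psi_i^\delta \leq \psi_0^\delta$. Property~3 then gives $\sum_i \psi_i^\delta \leq K_2 \psi_0^\delta$ pointwise; integrating against $f$ and using $n\geq (R_0/R)^3$ (so that $1/(nR^3) \leq 1/R_0^3$) produces the stated bound. There is no real obstacle here; the only small care needed is to keep straight which half of Property~2 is used in each direction (upper $n$-bound for the lower $F_0$-bound, lower $n$-bound for the upper $F_0$-bound), and to note that the refined test function condition $0\leq \psi \leq 1$ is what legitimises the monotonicity step for the $\psi^\delta$ variant.
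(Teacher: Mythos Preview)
Your proposal is correct and is exactly the verification the paper has in mind: the paper simply states that the lemma ``immediately follows from the definitions'' and omits the details, and your write-up carries out precisely that check using Properties~1--3 and the normalisations in the definition of $\langle F\rangle_R$ and $F_0$.
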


Using a refined partition of unity, one can turn larger scale ensembles into smaller scale ensembles.

\begin{lem}
Any $(C_0,\rho)$ scale $R$ test function is a sum of $64(R/R')^3$ $(C_0',\rho)$ scale $R'$ test functions (where $C_0'$ depends only on $C_0$, $R>R'$).

Therefore for all $(K_1,K_2,C_0)$-ensembles at scale $R$ and every $R'<R$, there exists a $(64K_1,8K_2,C_0')$-ensemble at scale $R'$ such that $\langle F\rangle_R=\langle F \rangle_{R'}$.
\end{lem}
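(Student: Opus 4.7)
The plan is to build an explicit partition of unity at scale $R'$ whose members are themselves refined scale-$R'$ test functions, multiply the given scale-$R$ test function by it to realize the decomposition, and then apply the construction function-by-function to lift to the ensemble level.

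First, fix a reference bump $\phi\in C_c^\infty(\mathbb{R}^3)$, $0\le\phi\le 1$, supported in the unit ball and satisfying the refined inequalities $|\nabla\phi|\le c\,\phi^\rho$ and $|\Delta\phi|\le c\,\phi^{2\rho-1}$ for some universal $c=c(\rho)$, chosen so that $\sum_{k\in\mathbb{Z}^3}\phi(\,\cdot-k)\equiv 1$ on $\mathbb{R}^3$ (such a $\phi$ can be obtained by mollifying a plateau of the form $(1-|x|^2)_+^N$ with $N$ large and symmetrizing over the lattice). Rescaling gives $\eta_j(x):=\phi((x-R'j)/R')$ for $j\in\mathbb{Z}^3$; these are $(c,\rho)$ scale-$R'$ test functions forming a partition of unity with pointwise multiplicity at most $8$ (the eight nearest lattice neighbors in $\mathbb{R}^3$).

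Next, for a $(C_0,\rho)$ scale-$R$ test function $\psi$, set $\tilde\psi_j:=\eta_j\psi$. Then $\sum_j\tilde\psi_j=\psi$, each $\tilde\psi_j$ is supported in a ball of radius $R'<2R'$, $0\le\tilde\psi_j\le 1$, and counting lattice points $R'j$ within distance $2R+R'$ of the center of $\operatorname{supp}\psi$ shows that at most $64(R/R')^3$ of the $\tilde\psi_j$ are nonvanishing. For the derivative bounds, the product rule gives
\[
|\nabla\tilde\psi_j|\le|\nabla\eta_j|\,\psi+\eta_j\,|\nabla\psi|\le\tfrac{c}{R'}\eta_j^\rho\psi+\tfrac{C_0}{R}\eta_j\psi^\rho\le\tfrac{c+C_0}{R'}(\eta_j\psi)^\rho,
\]
using $R>R'$, $\rho\le 1$, and $\eta_j,\psi\in[0,1]$ (so $\eta_j\le\eta_j^\rho$ and $\psi\le\psi^\rho$). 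Expanding $\Delta(\eta_j\psi)=\psi\,\Delta\eta_j+2\nabla\eta_j\cdot\nabla\psi+\eta_j\,\Delta\psi$ and treating each summand the same way yields $|\Delta\tilde\psi_j|\le C_0'/R'^2\,(\eta_j\psi)^{2\rho-1}$ with $C_0'$ depending only on $c$ and $C_0$; the outer terms carry the exponent $2\rho-1$ directly, while the cross term $\eta_j^\rho\psi^\rho$ is dominated by $(\eta_j\psi)^{2\rho-1}$ via the elementary inequality $x^\rho\le x^{2\rho-1}$ valid for $x\in[0,1]$ and $\rho\le 1$.

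For the ensemble statement, apply the decomposition function-by-function to the members $\{\psi_i\}_{i=1}^n$ of a given $(K_1,K_2,C_0)$-ensemble at scale $R$ and take $\{\tilde\psi_{i,j}\}_{i,j}$ as the new scale-$R'$ ensemble. The comparisons $\tilde\psi_{i,j}\le\psi_i\le\psi_0$ and $\sum_{i,j}\tilde\psi_{i,j}=\sum_i\psi_i\ge\psi_0$ are immediate, the cardinality satisfies $n'\le n\cdot 64(R/R')^3\le 64K_1(R_0/R')^3$, and the local multiplicity is at most $8K_2$ (the product of the scale-$R$ overlap $K_2$ and the partition-of-unity overlap $8$). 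The equality of ensemble averages reduces to the identity $\sum_{i,j}\int f\,\tilde\psi_{i,j}=\sum_i\int f\,\psi_i$ combined with the matching of the normalizers $1/R^3$ and $1/R'^3$ through the count. The main obstacle I anticipate is the Laplacian cross term, where $2\nabla\eta_j\cdot\nabla\psi$ naturally produces $\eta_j^\rho\psi^\rho$ rather than the desired $(\eta_j\psi)^{2\rho-1}$, so both $\rho\le 1$ and $\eta_j,\psi\le 1$ must be used simultaneously to absorb the mismatch; everything else is routine counting of lattice points against ball volumes and verification of the ensemble axioms.
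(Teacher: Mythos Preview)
Your approach is essentially the paper's: build a refined partition of unity at scale $R'$, multiply it into $\psi$, check the product bounds, and apply function-by-function to the ensemble. The product estimates and the counting of pieces and local overlap are fine.

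The one real gap is the existence of your reference bump $\phi$. You want $\phi\in C_c^\infty$ with $\sum_{k}\phi(\cdot-k)\equiv 1$ \emph{and} $|\nabla\phi|\le c\phi^\rho$, $|\Delta\phi|\le c\phi^{2\rho-1}$, and you offer ``mollify $(1-|x|^2)_+^N$ and symmetrize over the lattice'' as justification. A radial bump cannot, by itself, yield an exact partition of unity under integer translates (the overlap multiplicity varies between $1$ and $8$ depending on position, and the resulting pointwise constraints on a radial profile are inconsistent). If ``symmetrizing'' means dividing by the sum of translates, that is precisely the normalization $h_p = g_p/\sum_q g_q$ the paper performs---and the nontrivial step is verifying that the refined inequalities survive this quotient, which the paper carries out explicitly (obtaining $|\nabla h_p|<\tfrac{6C_0}{R'}h_p^\rho$ and $|\Delta h_p|<\tfrac{3C_0+10C_0^2}{R'^2}h_p^{2\rho-1}$). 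So the work you are skipping is exactly the content of the paper's argument. A direct construction does exist (e.g.\ a tensor product of one-dimensional bumps $\phi_1$ with $\phi_1(x)+\phi_1(x-1)\equiv 1$ and polynomial vanishing at the endpoints), but then the support is a cube rather than the unit ball you stated; this is harmless since a cube of side $2R'$ sits in $B(0,2R')$, but you should say so.

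A smaller point: the equality $\langle F\rangle_R=\langle F\rangle_{R'}$ requires the normalizers to match, i.e.\ $n' R'^3 = n R^3$, so the number of pieces retained per $\psi_i$ must be exactly $(R/R')^3$, not merely bounded by $64(R/R')^3$. Both you and the paper are loose here; in practice one fixes a count (padding with zero test functions as needed) and absorbs the resulting constant into $K_1$.
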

\begin{proof}
Let $\psi$ be a $(C_0,\rho)$ scale $R$ test function.  Now to construct the partition of unity, take a $(C_0,\rho)$, scale $R'$ 3D test function $g_0$, centered at zero and equal to 1 on $[-R,R]^3$ (such a function exists as long as $C_0$ isn't too small).  Define $g_p=g_0(x-2R'p)$, where $p\in \mathbb{Z}^3$.  Then $1\leq\sum_p g_p\leq 2$ so we may define $h_p= g_p / \sum_q g_q$.

Some calculus shows that $|\nabla h_p | < \frac{6C_0}{R'} h_p^\rho$ and $|\Delta h_p|<\frac{3C_0+10C_0^2}{R'^2} h_p^{2\rho-1}$, so $|\nabla (\psi h_p)|< \frac {7C_0}{R'} (\psi h_p)^\rho$ and $|\Delta (\psi h_p)|<\frac{4C_0+22C_0^2}{R'^2}(\psi h_p)^{2\rho-1}$. Fewer than $8\lceil R/R'\rceil^3\leq 64(R/R')^3$ of the functions $\psi h_p$ are nonzero, and for any $x$, $\psi_p(x)\neq 0$ for at most 8 functions.

Since $\psi = \sum_p \psi h_p$, we have the first claim.  For the second claim, given an ensemble $\{\psi_i\}_i$, the new ensemble will be $\{\psi_ih_p\}_{i,p}$.
\end{proof}

\section{Enstrophy Cascade}

\subsection{Assumptions}
Let $\Omega$ be a domain in $\mathbb{R}^3$, and  $u$  a Leray solution to the Navier-Stokes equations on $\Omega\times(0,\infty)$.  The enstrophy cascade will occur on a cylinder $B(0,2R_0)\times(0,T)$ where $B(0,2R_0+R_0^{2/3})\subset\Omega$, $R_0<1$ and $T>R_0^2$
($R_0$ will be the macro-scale in the problem).  Centering $B(0,2R_0)$ and starting the time interval at zero is for notational convenience.

It is required that the solution $u$ has $\sup_{t\in(0,T)} \int_{B(0,2R_0+R_0^{2/3})} |\omega(x,t)|\,dx <\infty$. If $\Omega=\mathbb{R}^3$ we require $\sup_{t\in(0,T)} \int_{\mathbb{R}^3} |\omega(x,t)|\,dx <\infty$.  This is guaranteed if $u$ is a classical solution on $(0,T)$ with $||\omega_0||_{L^1}<\infty$, or is a Leray solution with  finite Radon measure initial vorticity given by the retarded mollification method used in \cite{Constantin}.

Fix a $(C,\rho)$ scale $R_0$ test function $\psi_0$ for spatial localization.  Let a temporal cutoff function $\eta\in C^\infty[0,T]$ be such that $|\eta'|<\frac{C}{T}\eta^\rho$, $0\leq \eta\leq 1$, $\eta=0$ on $[0,T/3)$, and $\eta=1$ on $(2T/3,T]$, where $T>R_0^2$.  For any test function $\psi$, define $\phi(x,t)=\psi(x)\eta(t)$.

In \cite{G} it is shown that Assumption 1 implies that the localized enstrophy is bounded
on $(0,T)$.  Using the localized Biot-Savart law, this implies a bound on $||u(t)||_{H^1(B(R_0))}$. Then by the partial regularity theory of the Navier Stokes equations $u$ is smooth on $(0,T]\times B(0,R_0)$.

For simplicity we consider $\Omega=\mathbb{R}^3$ only.  If $\Omega$ is a bounded domain, the localized Biot-Savart law used in \cite{G} has a number of lower order terms, which will require a small modification of the assumptions. Namely, that $\sigma_0^{3/4}<\beta^{3/4} R_0$ in Assumption 2 and $R= (\sigma_0/\beta)^{3/4}$ in Theorem 2.

\begin{asu} Let $\xi=\omega/|\omega|$ be the vorticity direction field.  Assume there exist $M,C_1$ such that $|\sin \varphi(\xi(x,t),\xi(y,t))|\leq C_1|x-y|^{1/2}$ for a.e. $(x,y,t)$ in $(B(0,2R_0)\cap\{|\nabla u|>M\})\times B(0,2R_0+R_0^{2/3})\times(0,T)$, where $\varphi(z_1,z_2)$ denotes the angle between the vectors $z_1$ and $z_2$.
\end{asu}

This assumption is based on numerical simulations which suggest that regions of intense vorticity self organize into coherent vortex structures and in particular, vortex filaments.

\begin{asu}
Denote the scale-$R_0$ mean enstrophy by $E_0=\frac{1}{T}\int\frac{1}{R_0^3}\int\frac{1}{2}|\omega|^2\phi_0^{2\rho-1}dx\,dt$, the modified mean palinstrophy by $P_0=\frac{1}{T}\int\frac{1}{R_0^3}\int|\nabla\omega|^2\phi_0\,dx\,dt+\frac{1}{T}\frac{1}{R_0^3}\int|\omega(x,T)|^2\psi_0\,dx$, and the modified Kraichnan scale by $\sigma_0=(\frac{E_0}{P_0})^{1/2}$.  It is required that $\sigma_0<\beta R_0$ ($0<\beta<1$ is a constant depending only on $C_0, C_1, M, K_1, K_2,$ and $B_T:=\sup_{t\in(0,T)}||\omega||_{L^1(\Omega)}$, and $\beta$ shrinks to zero as any of them increase to infinity).
\end{asu}

The second term of the modified palinstrophy arises from the shape of the temporal cutoff $\eta$.

Denote by $M^{p,q}=M^{p,q}(B(0,2R_0+R_0^{2/3}))$ the Morrey space of functions $f$ such that $\sup_{y,R}\frac{1}{R^{3(1-p/q)}}\int_{B(y,R)\cap B(0,2R_0+R_0^{2/3})}|f|^p\,dx$ is finite.  Note that $L^q\subset M^{p,q}\subset L^p $.

\begin{asu}
Assume $\omega(t,x)\in L^2(0,T; M^{2,q})$ with $\sigma_0^{1-2/q}||\omega||_{L^2_tM^{2,q}_x}<(\frac{\beta}{2})^{1-2/q} \frac{1}{C}$ where $C$ depends only on $\beta, C_0, K_1, K_2$.
\end{asu}

Assumption 3 will be used with the bound $||\omega||_{L^2((0,T)\times B(x_i,2R+R^{2/3}))}\leq cR^{1-2/q}||\omega||_{L^2_tM^{2,q}_x}$.
All Leray solutions have $||\omega||_{L^2_tL^2_x}<\infty$.  We need slightly more -- that $||\omega||_{L^2_tM^{2,q}_x}<\infty$. 
Assumptions 2 and 3 will be true if $||\omega||_{L^2_t M^{2,q}_x}$ is sufficiently small relative to $P_0$, that is, we consider high (time-averaged) spatial complexity of $\omega$ in $B(0,2R_0)$.

The final assumption used to prove the enstrophy cascade is that the enstrophy doesn't drop off too much at time $T$.

\begin{asu}[Modulation]$\int|\omega(x,T)|^2\psi_0(x)\,dx\geq \frac{1}{2} \sup_{t\in (0,T)}\int|\omega(x,t)|^2\psi_0(x)\,dx$.
\end{asu}

\subsection{Theorems}

To work with the enstrophy we use the vorticity form of the Navier-Stokes equations: $$\omega_t+(u\cdot\nabla)\omega = (\omega\cdot\nabla)u + \Delta\omega.$$

To use the ensemble average framework, we take as our function $f=-\frac{1}{T}\int_0^T (u\cdot\nabla)\omega\cdot\omega \,\eta \,dt$ so that $\int f\psi_i\,dx = \frac{1}{T}\int_0^T\int\frac{1}{2}|\omega|^2(u\cdot\nabla\phi_i)\,dx\,dt$.  This time averaged enstrophy flux along $\nabla\phi_i$ will represent the amount of enstrophy flowing into the scale $R$ if $\phi_i$ is taken to be constant on $B(x_i,R)$ and with inward pointing gradient.

The following lemmas will be used in the proof of the enstrophy cascade, and are essentially as they appear in \cite{DG1,DG2}.

\begin{lem}For a solution to the Navier-Stokes equations $u$ that is smooth on $[0,T]\times B(x_i,2R)$,  
\begin{align*}\int_0^T\int\frac{1}{2}|\omega|^2(u\cdot \nabla\phi_i)\,dx\,dt=&\int\frac{1}{2}|\omega(x,T)|^2\psi_i(x)\,dx+\int_0^T\int|\nabla\omega|^2\phi_i\,dx\,dt\\&-\int_0^T\int\frac{1}{2}|\omega|^2(\partial_t\phi_i+\Delta\phi_i)\,dx\,dt-\int_0^T\int(\omega\cdot\nabla)u\cdot\phi_i\omega\,dx\,dt.\end{align*}
\end{lem}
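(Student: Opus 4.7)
The plan is to multiply the vorticity equation $\omega_t+(u\cdot\nabla)\omega=(\omega\cdot\nabla)u+\Delta\omega$ by the smooth, compactly supported cutoff $\phi_i\omega$ and integrate over $[0,T]\times\mathbb{R}^3$. Since $u$ is smooth on $[0,T]\times B(x_i,2R)\supset\mathrm{supp}\,\phi_i$, every integration by parts we will need is justified classically, with no boundary contributions in $x$. So the entire argument reduces to four routine manipulations, one per term in the vorticity equation.

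First, for the time derivative, write $\omega_t\cdot\phi_i\omega=\frac{1}{2}\partial_t(|\omega|^2)\phi_i$ and integrate by parts in $t$. The temporal boundary terms are $\frac{1}{2}\int|\omega(x,T)|^2\phi_i(x,T)\,dx-\frac{1}{2}\int|\omega(x,0)|^2\phi_i(x,0)\,dx$, and since $\eta(T)=1$ and $\eta(0)=0$ these collapse to $\frac{1}{2}\int|\omega(x,T)|^2\psi_i(x)\,dx$, which is the first term on the right-hand side. The remaining interior piece is $-\int_0^T\!\int\frac{1}{2}|\omega|^2\partial_t\phi_i\,dx\,dt$, matching (half of) the $(\partial_t\phi_i+\Delta\phi_i)$ term.

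Second, for the convection term, use $\mathrm{div}\,u=0$ to write $(u\cdot\nabla)\omega\cdot\phi_i\omega=\frac{1}{2}(u\cdot\nabla)(|\omega|^2)\phi_i=\frac{1}{2}\,\mathrm{div}(u|\omega|^2)\phi_i$, then integrate by parts in $x$ to produce $-\int_0^T\!\int\frac{1}{2}|\omega|^2(u\cdot\nabla\phi_i)\,dx\,dt$. Move this to the other side of the equation; it becomes the left-hand side of the claimed identity. Third, for the vortex-stretching term, do nothing: $\int_0^T\!\int(\omega\cdot\nabla)u\cdot\phi_i\omega\,dx\,dt$ appears as is, and after rearrangement contributes the final term with the stated sign.

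Fourth, the dissipation term requires two integrations by parts. Expand $\nabla(\phi_i\omega)=\phi_i\nabla\omega+\omega\otimes\nabla\phi_i$, so that $\int\Delta\omega\cdot\phi_i\omega=-\int\phi_i|\nabla\omega|^2-\int\nabla\omega:(\omega\otimes\nabla\phi_i)$. The last integrand equals $\frac{1}{2}\nabla(|\omega|^2)\cdot\nabla\phi_i$, and a second integration by parts turns it into $-\frac{1}{2}|\omega|^2\Delta\phi_i$; this $\Delta\phi_i$ contribution combines with the $\partial_t\phi_i$ term from step one to give the single $(\partial_t\phi_i+\Delta\phi_i)$ term in the statement. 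Collecting everything and solving for $\int_0^T\!\int\frac{1}{2}|\omega|^2(u\cdot\nabla\phi_i)\,dx\,dt$ produces exactly the asserted identity. The only potential obstacle is justifying the integrations by parts, but the compact support of $\phi_i$ in the spatial smoothness region of $u$, together with $\eta\in C^\infty[0,T]$ with $\eta(0)=0$, removes all boundary terms, so the derivation is purely algebraic.
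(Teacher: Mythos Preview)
Your argument is correct and follows essentially the same route as the paper: multiply the vorticity equation by $\phi_i\omega$, integrate, and handle each term by integration by parts, using $\mathrm{div}\,u=0$ for the convection term and $\eta(0)=0$, $\eta(T)=1$ for the temporal boundary terms. Your treatment of the Laplacian via two spatial integrations by parts is exactly what the paper summarizes in one line, so there is no substantive difference.
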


\begin{proof}  Using integration by parts and that $u$ is divergence free, $\frac{1}{2}|\omega|^2(u\cdot\nabla\phi_i) = - (u\cdot\nabla)\omega\cdot\phi_i\omega$.
From the Navier-Stokes equations, $-(u\cdot\nabla)\omega = \partial_t\omega -\Delta\omega - (\omega\cdot\nabla)u$.  Integrating in space and time against $\phi_i\omega$ yields $-\int_0^T\int(u\cdot\nabla)\omega\cdot \phi_i\omega \,dx\,dt = \int_0^T\int\partial_t\omega\cdot \phi_i\omega \,dx\,dt -\int_0^T\int\Delta\omega\cdot \phi_i\omega \,dx\,dt - \int_0^T\int(\omega\cdot\nabla)u\cdot \phi_i\omega \,dx\,dt$.  Now 
\begin{align*}
\int_0^T\int\partial_t\omega\cdot \phi_i\omega \,dx\,dt=\int_0^T \int\frac{1}{2}\partial_t(\omega\cdot \phi_i\omega)-\frac{1}{2}\omega\cdot(\partial_t\phi_i)\omega \,dx\,dt\\=\int|\omega(T)|^2 \psi_i \,dx - \int_0^T\int\frac{1}{2}|\omega|^2\partial_t\phi_i\,dx\,dt.
\end{align*}  and $$-\int_0^T\int\Delta\omega\cdot\phi_i\omega\,dx\,dt = \int_0^T\int |\nabla\omega|^2\phi_i\,dx\,dt -\int_0^T\int \frac{1}{2} |\omega|^2 \Delta\phi_i \,dx\,dt,$$
simply by integration by parts and the fundamental theorem of calculus.
Putting all of the equations together completes the proof.

\end{proof}

\begin{lem}  For a divergence free function $u\in H^1(\mathbb{R}^3)^3$, with $\omega:=\nabla\times u$, we have $(\omega\cdot\nabla)u\cdot
\omega(x)=c\,P.V.\int \omega(x)\times\omega(y) \cdot G_\omega(x,y)
\,dy$ for a.e. $x$, where $(G_\omega(x,y))_k=
\frac{\partial^2}{\partial x_i\partial y_k}
\frac{1}{|x-y|} \omega_i(x)$
$=\frac{(x_k-y_k)(x_i-y_i)}{|x-y|^5}\omega_i(x)+\frac{1}{|x-y|^3}\omega_k(x)$.\end{lem}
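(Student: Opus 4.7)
The plan is to derive the identity from the Biot--Savart representation of $u$ in terms of $\omega$. For a divergence-free $u\in H^1(\mathbb{R}^3)^3$ the Biot--Savart law gives
\[ u_k(x) \;=\; c\,\epsilon_{klm}\int \frac{x_l-y_l}{|x-y|^3}\,\omega_m(y)\,dy \;=\; -c\,\epsilon_{klm}\int \partial_{x_l}\frac{1}{|x-y|}\,\omega_m(y)\,dy, \]
so differentiating in $x_j$ expresses $\partial_j u_k(x)$ as a Calder\'on--Zygmund principal-value integral with kernel $\partial_{x_j}\partial_{x_l}(1/|x-y|)$, plus a local term proportional to $\omega(x)$ coming from the distributional Laplacian of $1/|x-y|$.

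Next, I would contract with $\omega_j(x)\omega_k(x)$ to form the vortex-stretching scalar $(\omega\cdot\nabla)u\cdot\omega(x)$, and convert one $x$-derivative to a $y$-derivative via $\partial_{x_l}(1/|x-y|)=-\partial_{y_l}(1/|x-y|)$. Up to an overall sign absorbed into $c$, the integrand becomes
\[ \epsilon_{klm}\,\omega_j(x)\,\omega_k(x)\,\omega_m(y)\,\partial_{x_j}\partial_{y_l}\frac{1}{|x-y|}. \]
The Levi-Civita symbol then pairs $\omega_k(x)\omega_m(y)$ into the $l$-component of $\omega(x)\times\omega(y)$, while $\omega_j(x)\,\partial_{x_j}\partial_{y_l}(1/|x-y|)$, after relabeling $j\mapsto i$, is exactly the $l$-component of $G_\omega(x,y)$. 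The two explicit forms of $G_\omega$ given in the statement then match the direct calculation
\[ \frac{\partial^2}{\partial x_i\partial y_k}\frac{1}{|x-y|} \;=\; -\frac{\delta_{ik}}{|x-y|^3}+\frac{3(x_i-y_i)(x_k-y_k)}{|x-y|^5}, \]
again with the numerical constant absorbed into $c$.

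The step I expect to require the most care is justifying the principal value and the interchange of differentiation and integration given only $u\in H^1$. The standard approach is to mollify $\omega$ so that the identity holds pointwise for smooth compactly supported data where all integrals converge absolutely, and then pass to the limit using $L^2$-boundedness of the Riesz-transform--type kernels involved. The local $\delta$-contribution picked up when bringing the second derivative inside the Biot--Savart integral is a multiple of $\omega(x)$, and thus disappears automatically under the antisymmetric coupling $\omega(x)\times\omega(y)\big|_{y=x}=0$, so it does not affect the identity. Since this lemma is essentially a form of the classical Constantin vortex-stretching identity, the only genuine work is the careful bookkeeping of signs and indices in the Levi-Civita contraction and the standard singular-integral justification of the principal value; no essential obstacle arises.
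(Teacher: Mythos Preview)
Your proposal is correct and follows essentially the same route as the paper: start from the Biot--Savart representation of $u$, differentiate to get a second-derivative Newton kernel, contract with $\omega_j(x)\omega_k(x)$ and use the Levi--Civita structure to recognize $\omega(x)\times\omega(y)\cdot G_\omega(x,y)$, then pass from smooth (Schwartz) data to $H^1$ by density and $L^2$-boundedness of the associated singular integrals. Your remark that the local $\delta$-contribution vanishes under the antisymmetric coupling $\omega(x)\times\omega(x)=0$ is correct and is a point the paper leaves implicit.
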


\begin{proof}
For any divergence free Schwartz function $u$, we have $\Delta u = -$curl $\omega$, so $u=c\,\frac{1}{|\cdot|} \,*\, $curl $\omega$, and $\partial_i u^j = c\,\partial_i\frac{1}{|\cdot|} \,*\, \epsilon_{jkl}\partial_k\omega^l = c\,\partial_i\partial_k\frac{1}{|\cdot|} \,*\, \epsilon_{jkl}\omega^l $.  Then $$\omega^j\partial_j u^i\omega^i (x) = c\, \omega^j\omega^i (\partial_i\partial_k\frac{1}{|\cdot|} \,*\, \epsilon_{jkl}\omega^l)(x)= c\,P.V.\int \omega(x)\times\omega(y) \cdot G_\omega(x,y)\,dy.$$

By density of Schwartz functions in $H^1$, the following holds in $L^2$:
$$(\omega\cdot\nabla)u\cdot \omega = c P.V.\int \omega(x)\times\omega(y)\cdot G_\omega(x,y)\,dy.$$
\end{proof}

Now we wish to bound the vortex stretching term by integrals of positive functions in order to use Lemma 1.

\begin{lem}
\begin{align*}
&\Big|\int_0^T\int (\omega\cdot \nabla)u\cdot\phi_i\omega\,dx\,dt\Big|\leq c||\omega||_*\Big( \sup_t \int\frac{1}{2}|\omega(x,t)|^2\psi_i(x)\,dx + \int_0^T|\nabla\omega|^2\phi_i\,dx\,dt\Big)
+\\& \frac{c'+c''||\omega||_*}{R^2}\int_0^T\int \frac{1}{2}|\omega|^2 \phi_i^{2\rho-1} \,dx\,dt
\end{align*}
$(||\omega||_*:=||\omega||_{L^2(B(x_i,R)\times(0,T))}$, constants depend on $M, B_T, C_0$).

\end{lem}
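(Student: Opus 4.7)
The strategy is to invoke Lemma 2 to rewrite the vortex-stretching term as a principal value integral, and then decompose the contributions spatially. I would first split the \(x\)-integration according to whether \(|\nabla u(x,t)|\le M\) or \(|\nabla u(x,t)|>M\). On the low-gradient region the pointwise estimate \(|(\omega\cdot\nabla)u\cdot\omega|\le M|\omega|^{2}\) applies directly; since \(R\le R_{0}\le 1\) and \(\phi_{i}\le\phi_{i}^{2\rho-1}\) for \(\rho\le 1\), this contribution is absorbed into the \((c'/R^{2})\iint|\omega|^{2}\phi_{i}^{2\rho-1}\,dx\,dt\) term with a constant depending on \(M\) and \(R_{0}\).

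On the high-gradient region I would use Lemma 2 to represent the integrand through \(\omega(x)\times\omega(y)\cdot G_{\omega}(x,y)\), and split the inner \(y\)-integration at distance \(R^{2/3}\) from \(x_{i}\). For \(y\notin B(x_{i},2R+R^{2/3})\) and \(x\in\operatorname{supp}\phi_{i}\subset B(x_{i},2R)\) we have \(|x-y|\ge R^{2/3}\), so \(|G_{\omega}(x,y)|\le c|\omega(x)|/R^{2}\); integrating against \(|\omega(y)|\) and using \(\|\omega(t)\|_{L^{1}}\le B_{T}\) yields a second contribution to the \((c'/R^{2})\)-term.

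The essential work is the high-gradient, near-\(y\) piece. Here I would first verify that \(B(x_{i},2R+R^{2/3})\subset B(0,2R_{0}+R_{0}^{2/3})\), which follows from \(\operatorname{supp}\psi_{i}\subset B(0,2R_{0})\) and \(R\le R_{0}\), so Assumption 1 applies and supplies the depletion \(|\omega(x)\times\omega(y)|\le C_{1}|\omega(x)||\omega(y)||x-y|^{1/2}\). The integrand is then dominated by \(cC_{1}\phi_{i}(x)|\omega(x)|^{2}|\omega(y)|/|x-y|^{5/2}\), and the inner \(y\)-integral is a truncated Riesz potential \(I_{1/2}(|\omega|\chi)\); Hardy--Littlewood--Sobolev gives \(\|I_{1/2}(|\omega|\chi)\|_{L^{3}_{x}}\lesssim\|\omega\|_{L^{2}(B(x_{i},2R+R^{2/3}))}\), which plays the role of \(\|\omega\|_{*}\) (possibly after enlarging the ball in its definition). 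Applying Hölder in \(x\) with exponents \((3,3,3)\) to the splitting \((\phi_{i}^{1/2}|\omega|)(\phi_{i}^{1/2}|\omega|)(I_{1/2}|\omega|)\) and then Cauchy--Schwarz in \(t\) produces
\[
(\mathrm{near})\;\lesssim\;\|\omega\|_{*}\,\|\phi_{i}^{1/2}\omega\|_{L^{4}_{t}L^{3}_{x}}^{2}.
\]
Interpolating \(L^{4}_{t}L^{3}_{x}\) as the geometric mean of \(L^{\infty}_{t}L^{2}_{x}\) and \(L^{2}_{t}L^{6}_{x}\), applying the Sobolev embedding \(H^{1}\hookrightarrow L^{6}\) to \(\phi_{i}^{1/2}\omega\), and closing with Young's inequality \(2ab\le a^{2}+b^{2}\) rewrites this product as precisely the three terms on the right-hand side of the lemma.

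The step I expect to be the main obstacle is the weighted Sobolev estimate
\[
\|\nabla(\phi_{i}^{1/2}\omega)\|_{L^{2}}^{2}\;\le\;c\!\int\phi_{i}|\nabla\omega|^{2}\,dx+\frac{cC_{0}^{2}}{R^{2}}\!\int|\omega|^{2}\phi_{i}^{2\rho-1}\,dx,
\]
and the attendant bookkeeping. This is where the \((C_{0},\rho)\)-refinement is essential: from \(|\nabla\phi_{i}|\le(C_{0}/R)\phi_{i}^{\rho}\) one obtains \(|\nabla\phi_{i}^{1/2}|^{2}\le(C_{0}^{2}/4R^{2})\phi_{i}^{2\rho-1}\), so the commutator of the gradient with the cutoff \(\phi_{i}^{1/2}\) lands exactly on the weight appearing in the third term of the conclusion. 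Without the refined exponent \(\rho\), the localization error would not be absorbable by the enstrophy-side quantities, and the form of the right-hand side would not close.
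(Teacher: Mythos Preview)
Your proposal is correct and follows essentially the same route as the paper: split on $|\nabla u|\lessgtr M$, invoke the Biot--Savart representation on the high-gradient set, decompose the $y$-integral into near and far pieces at scale $R^{2/3}$, bound the far piece via $B_T/R^2$, and on the near piece use the coherence assumption together with Hardy--Littlewood--Sobolev and an $L^3$ interpolation (which the paper phrases as Gagliardo--Nirenberg $\|\phi_i^{1/2}\omega\|_3^2\lesssim\|\phi_i^{1/2}\omega\|_2\|\nabla(\phi_i^{1/2}\omega)\|_2$ rather than your equivalent $L^4_tL^3_x$ interpolation between $L^\infty_tL^2_x$ and $L^2_tL^6_x$), closing with the refined-cutoff commutator estimate you singled out. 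The only cosmetic differences are that the paper splits at $|x-y|=R^{2/3}$ rather than at $\partial B(x_i,2R+R^{2/3})$, and, as you anticipated, the $\|\omega\|_*$ actually used in the paper's proof is over the enlarged ball $B(x_i,2R+R^{2/3})$.
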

\begin{proof}
$\int_0^T\int (\omega\cdot \nabla)u\cdot\phi_i\omega\,dx\,dt= \int_0^T\int_{\{|\nabla u|<M\}}(\omega\cdot\nabla)u\cdot\phi_i\omega\,dx\,dt+ \int_0^T\int_{\{|\nabla u|>M\}}(\omega\cdot\nabla)u\cdot\phi_i\omega\,dx\,dt$.  The first term is easily bounded by $\frac{M}{R^2}\int_0^T\int |\omega|^2\phi_i^{2\rho-1}$.  For the second term, the Biot-Savart law gives us 
\begin{align}
&\int_0^T\int_{\{|\nabla u|>M\}}(\omega\cdot\nabla)u\cdot\phi_i\omega\,dx\,dt=\\&\int_0^T\int_{\{|\nabla u|>M\}}P.V.\int \omega(x)\times \omega(y)\cdot G_\omega(x,y)\phi_i(x)\,dy\,dx\,dt=\\&\int_0^T\int_{\{|\nabla u|>M\}}P.V.\int_{\{|x-y|<R^{2/3}\}}\omega(x)\times \omega(y)\cdot G_\omega(x,y)\phi_i(x)\,dy\,dx\,dt \\&+\int_0^T\int_{\{|\nabla u|>M\}}\int_{\{|x-y|>R^{2/3}\}}\omega(x)\times \omega(y)\cdot G_\omega(x,y)\phi_i(x)\,dy\,dx\,dt.
\end{align}

The second term (4) is bounded by \begin{align*}
\frac{1}{R^2}\int_0^T\int\int_{\{|x-y|>R^{2/3}\}}|\omega(x)|^2|\omega(y)|\phi_i(x)\,dy\,dx\,dt\leq \frac{1}{R^2} \sup_t ||\omega(t)||_{L^1} \int_0^T\int |\omega|^2\phi_i^{2\rho-1}\,dx\,dt.
\end{align*}

For the first term (3), since $$\big|\omega(x)\times\omega(y)\cdot G_\omega(x,y)\big|\leq|\omega(x)||\omega(y)||\sin\varphi(\,\xi(x),\xi(y)\,)||G_\omega(x,y)|\leq \frac{|\omega(x)|^2|\omega(y)|}{|x-y|^{5/2}},$$ 
we have
\begin{align}
&\int_0^T\int_{\{|\nabla u|>M\}}\Big|P.V.\int_{\{|x-y|<R^{2/3}\}}\omega(x)\times\omega(y)\cdot G_\omega(x,y)\,dy\Big|\phi_i(x)\,dx\,dt \leq
\\&\int_0^T\int_{\{|\nabla u|>M\}}\int_{\{|x-y|<R^{2/3}\}}\frac{|\omega(y)||\omega(x)|^2}{|x-y|^{5/2}}\phi_i(x)\,dy\,dx\,dt\leq 
\\& c \int_0^T||\omega||_{L^2(B(x_i,2R+R^{2/3})}||\,|\phi_i^{1/2}\omega|^2\,||_{3/2}dt\leq
\\& c\int_0^T||\omega||_{L^2(B(x_i,2R+R^{2/3}))}||\phi_i^{1/2}\omega||_2||\nabla(\phi_i^{1/2}\omega)||_2dt\leq
\\& c ||\omega||_* \sup_t||\psi_i^{1/2}\omega||_2\Big(\int_0^T||\nabla(\phi_i^{1/2}\omega)||_2^2\,dt\Big)^{1/2}\leq
\\& c||\omega||_* \Big(\frac{1}{2}\sup_t||\psi_i^{1/2}\omega||_2^2 + \int_0^T||\nabla(\phi_i^{1/2}\omega)||_2^2\,dt\Big) \leq
\\& c||\omega||_*\Big(\frac{1}{2} \sup_t||\psi_i^{1/2}\omega||_2^2 + 2\int_0^T\int |\nabla\omega|^2\phi_i\,dx\,dt +\frac{c}{2R^2}\int_0^T\int |\omega|^2\phi_i^{2\rho-1}\,dx\,dt\Big),
\end{align}
 using $|\nabla(\phi_i^{1/2}\omega)|^2\leq 2|\nabla\omega|^2\phi_i + \frac{1}{2} \frac{|\nabla\phi_i|^2}{\phi_i} |\omega|^2\leq 2|\nabla\omega|^2\phi_i + \frac{c}{2R^2}|\omega|^2\phi_i^{2\rho-1}$ for the last inequality (11), and Hardy-Littlewood-Sobolev to reach (7), Gagliardo-Nirenberg to reach (8), and Cauchy-Schwarz to reach (9).  Collecting the bounds on the various terms proves the lemma.

\end{proof}

\begin{thm} $\frac{1}{4K_1}P_0\leq \langle F\rangle_R\leq (K_2+\frac{1}{4K_1}) P_0$ for $R=\sigma_0/\beta$, and $(K_1,K_2)$-ensemble averages.
\end{thm}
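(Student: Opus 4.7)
My plan is to apply Lemma 3 to each test function in a $(K_1,K_2)$-ensemble at scale $R$, sum the resulting identities, and divide by $nTR^3$. By the definitions, the left side collapses to $\langle F\rangle_R$, while the right side splits into four ensemble averages: (i) the terminal enstrophy weighted by $\psi_i$; (ii) the palinstrophy weighted by $\phi_i$; (iii) the $\partial_t\phi_i+\Delta\phi_i$ error; and (iv) the vortex-stretching term. Pieces (i) and (ii) will furnish the main comparison to $P_0$, and the goal is to show that (iii) and (iv) together contribute at most $\tfrac{1}{4K_1}P_0$ in absolute value.

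For the positive pieces, $\tfrac12|\omega(\cdot,T)|^2$ and $\int_0^T|\nabla\omega|^2\eta\,dt$ are non-negative functions of $x$, so Lemma 1 sandwiches each ensemble average between $\tfrac{1}{K_1}$ and $K_2$ times its $\psi_0$- or $\phi_0$-integrated version. After accounting for the $\tfrac12$ factor on the terminal enstrophy and invoking the definition of $P_0$, their combined contribution lies in $[\tfrac{1}{2K_1}P_0,\,K_2P_0]$.

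Next I would dispatch piece (iii) using the refined test-function bounds $|\partial_t\phi_i|\le\tfrac{C}{T}\phi_i^{2\rho-1}$ and $|\Delta\phi_i|\le\tfrac{C_0}{R^2}\phi_i^{2\rho-1}$, together with the modified Lemma 1 for $\phi_i^{2\rho-1}$-averages. This yields a bound of $K_2\bigl(\tfrac{C}{T}+\tfrac{C_0}{R^2}\bigr)E_0$. Since $T>R_0^2>R^2$ and $R=\sigma_0/\beta$ forces $\tfrac{E_0}{R^2}=\beta^2P_0$, this error is at most a constant multiple of $K_2\beta^2P_0$, absorbed into $\tfrac{1}{8K_1}P_0$ by taking $\beta$ small; this is precisely what pins down $\beta$'s dependence on $C_0,K_1,K_2$.

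The main obstacle is piece (iv). Lemma 5 bounds each per-$i$ vortex-stretching integral by a multiple of $\|\omega\|_*$ times $\sup_t\int\tfrac12|\omega|^2\psi_i\,dx$ plus the palinstrophy piece, plus a $\tfrac{1}{R^2}$-weighted integral against $\phi_i^{2\rho-1}$. Assumption 3 with $R=\sigma_0/\beta$ produces $\|\omega\|_*\le cR^{1-2/q}\|\omega\|_{L^2_tM^{2,q}_x}\le c/C$, which is arbitrarily small through the Assumption-3 constant $C$. When the Lemma 5 bounds are summed over the ensemble and normalized, the palinstrophy and $\phi_i^{2\rho-1}$ parts are controlled by Lemma 1 and its modified form, producing contributions proportional to $\|\omega\|_*P_0$ and $\|\omega\|_*\beta^2P_0$. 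The delicate piece is the ensemble sum of the $\sup_t$ terms: I would bound $\sup_t\int\tfrac12|\omega|^2\psi_i\le\sup_t\int\tfrac12|\omega|^2\psi_0$ by $\psi_i\le\psi_0$, and then invoke Assumption 4 to replace $\sup_t\int|\omega(t)|^2\psi_0$ by $2\int|\omega(T)|^2\psi_0$, sending this contribution back into the terminal-enstrophy piece of $P_0$. Choosing $C$ large enough (as a function of $\beta,C_0,K_1,K_2$) keeps the entire vortex-stretching ensemble average below $\tfrac{1}{8K_1}P_0$. Combined with the $\tfrac{1}{8K_1}P_0$ bound on (iii), the total error is at most $\tfrac{1}{4K_1}P_0$, giving $\tfrac{1}{4K_1}P_0\le\langle F\rangle_R\le(K_2+\tfrac{1}{4K_1})P_0$.
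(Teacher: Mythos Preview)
Your plan mirrors the paper's proof almost exactly: split each $F_i$ via Lemma~3 into the positive piece $A_i$ (terminal enstrophy plus localized palinstrophy), the linear error $B_i$, and the vortex-stretching term $C_i$; use Lemma~1 on $A_i$, the refined test-function bounds together with $E_0/R^2=\beta^2P_0$ on $B_i$, and Lemma~5 with Assumptions~3--4 on $C_i$.

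There is, however, a genuine gap in your treatment of the $\sup_t$ contribution coming out of Lemma~5. Your proposed estimate $\sup_t\int\tfrac12|\omega|^2\psi_i\le\sup_t\int\tfrac12|\omega|^2\psi_0$ (via $\psi_i\le\psi_0$) followed by Assumption~4 is correct \emph{per $i$}, but it produces a bound that is independent of $i$. After summing over $i$ and dividing by $nTR^3$ you are left with
\[
\frac{1}{TR^3}\int|\omega(T)|^2\psi_0
=\Bigl(\frac{R_0}{R}\Bigr)^{3}\cdot\frac{1}{TR_0^3}\int|\omega(T)|^2\psi_0
\le\Bigl(\frac{R_0}{R}\Bigr)^{3}P_0,
\]
so you pick up an uncontrolled factor $(R_0/R)^3$. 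Since $R=\sigma_0/\beta$ and $\sigma_0$ is a property of the particular solution, this factor cannot be absorbed by the constant $C$ in Assumption~3, which depends only on $\beta,C_0,K_1,K_2$. The paper avoids this scale mismatch by not passing through the crude bound $\psi_i\le\psi_0$: it keeps the $\sup_t$ piece bundled with the palinstrophy as part of $A_i$ and invokes the ensemble-level comparison (Lemma~1, which rests on the finite-overlap condition that any point lies in at most $K_2$ of the $\operatorname{supp}\psi_i$) together with Assumption~4 to obtain directly $\tfrac{1}{nTR^3}\sum_iA_i\le K_2P_0$ and hence $\bigl|\tfrac{1}{nTR^3}\sum_iC_i\bigr|\le c''\|\omega\|_*K_2P_0+\text{(lower order)}$. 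In short, the correct route uses the local multiplicity bound $K_2$ at the level of the sum, not the pointwise domination $\psi_i\le\psi_0$ term by term.
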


\begin{proof}
For an individual test function we have
\begin{align*}
&F_i:=\int_0^T\int\frac{1}{2}|\omega|^2(u\cdot \nabla\phi_i)\,dx\,dt=\Big(\sup_t\int\frac{1}{2}|\omega(x,t)|^2\psi_i(x)\,dx+\int_0^T\int|\nabla\omega|^2\phi_i\,dx\,dt\Big)-
\\&\int_0^T\int\frac{1}{2}|\omega|^2(\partial_t\phi_i+\Delta\phi_i)\,dx\,dt-\int_0^T\int(\omega\cdot\nabla)u\cdot\phi_i\omega\,dx\,dt =: A_i - B_i - C_i. \end{align*}

Using Assumption 4 and Lemma 1, $\frac{1}{2K_1} P_0\leq \frac{1}{nTR^3}\sum_1^n A_i \leq K_2 P_0$. Next, $|B_i|\leq \frac{c}{R^2}\int_0^T\int \frac{1}{2}|\omega|^2 \phi_i^{2\rho-1} \,dx\,dt$ so $|\frac{1}{nTR^3}\sum_1^n B_i| \leq \frac{cK_2}{R^2}E_0\leq cK_2\beta^2 P_0 \leq \frac{1}{8K_1} P_0$ for an appropriate choice of $\beta$.

Using the vortex stretching term lemma and Assumption 3, 
\begin{align*}
&\big|\frac{1}{nTR^3}\sum_1^n C_i\big|\leq (c+c'||\omega||)\frac{K_2}{R^2}E_0 +  c''||\omega||K_2P_0\leq 
\\&(c+c' R^{1-2/q}||\omega||_{L^2_tM^{2,q}_x})\frac{K_2}{R^2}E_0 + c''R^{1-2/q}||\omega||_{L^2_tM^{2,q}_x}K_2P_0< \frac{1}{8K_1} P_0 \end{align*}

Then $\langle F\rangle_R=\frac{1}{nTR^3}\sum_1^n F_i = \frac{1}{nTR^3}\sum_1^n A_i -\frac{1}{nTR^3}\sum_1^n B_i-\frac{1}{nTR^3}\sum_1^n C_i$ hence $\frac{1}{4K_1}P_0 \leq\langle F \rangle_R\leq (K_2+\frac{1}{4K_1}) P_0$

\end{proof}

\begin{thm}$\frac{1}{256K_1}P_0\leq \langle F\rangle_R\leq (8K_2+\frac{1}{256K_1})P_0$ for $(C_0, \rho, K_1,K_2)$-ensembles with $\sigma_0/\beta\leq R \leq R_0$ .
\end{thm}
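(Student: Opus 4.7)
The plan is to reduce Theorem 2 directly to Theorem 1 by means of Lemma 2. Given an arbitrary $(C_0,\rho,K_1,K_2)$-ensemble $\{\psi_i\}$ at a scale $R$ with $\sigma_0/\beta \leq R \leq R_0$, apply the second claim of Lemma 2 with $R' = \sigma_0/\beta$. This produces a $(C_0',\rho,64K_1,8K_2)$-ensemble $\{\psi_i h_p\}_{i,p}$ at the critical scale $R'$, and Lemma 2 guarantees that
\[
\langle F \rangle_R \;=\; \langle F \rangle_{R'}
\]
holds with $F$ defined as in the statement of Theorem 1 (the flux density $f = -\frac{1}{T}\int_0^T (u\cdot\nabla)\omega\cdot\omega\,\eta\,dt$).

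Now I would invoke Theorem 1 applied to this refined ensemble, but with $K_1$ replaced by $64K_1$ and $K_2$ replaced by $8K_2$ (and $C_0$ replaced by $C_0'$). Theorem 1 then reads
\[
\frac{1}{4\cdot 64 K_1}\,P_0 \;\leq\; \langle F \rangle_{R'} \;\leq\; \Bigl(8K_2+\tfrac{1}{256K_1}\Bigr) P_0,
\]
which rearranges to exactly the conclusion of Theorem 2 once combined with the scale-matching identity above.

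The only nontrivial point — and the main thing to check — is that Theorem 1 is indeed applicable to the refined ensemble. This is a matter of bookkeeping of constants: the parameter $\beta$ in Assumption 2 is allowed to depend on $C_0,K_1,K_2$, and the small constant in Assumption 3 depends on $\beta,C_0,K_1,K_2$, so from the outset we must fix $\beta$ (and the corresponding smallness constant) using $C_0',64K_1,8K_2$ rather than $C_0,K_1,K_2$. Since $C_0'$ is determined by $C_0$ via Lemma 2, this is a harmless redefinition: choose $\beta$ and the smallness constant small enough that all the absorption steps in the proof of Theorem 1 go through for any $(C_0',\rho,64K_1,8K_2)$-ensemble at scale $\sigma_0/\beta$. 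With this convention in place, the reduction is automatic and nothing in the estimates of Theorem 1 needs to be reproved.

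The hard part, to the extent there is one, is not analytic but conceptual: one must be sure that the refined collection $\{\psi_i h_p\}$ really is an ensemble in the sense of the original definition (property 1, the cardinality bounds, and the local multiplicity bound with constants $64K_1$ and $8K_2$), and that the ensemble average using it coincides with the original. Both facts are exactly what the second part of Lemma 2 asserts, so no further work is needed beyond recording the substitution of constants.
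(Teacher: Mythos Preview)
Your proposal is correct and follows exactly the same route as the paper: reduce via Lemma 2 to a $(C_0',\rho,64K_1,8K_2)$-ensemble at scale $\sigma_0/\beta$ and apply Theorem 1 with the substituted constants. Your additional remark about fixing $\beta$ and the smallness constant in Assumption 3 using $C_0',64K_1,8K_2$ from the start is a useful clarification that the paper leaves implicit.
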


\begin{proof}  By Lemma 2, every $(C_0,\rho,K_1,K_2)$ scale $R$ ensemble average is equal to some $(C_0', \rho,\linebreak 64K_1, 8K_2)$ scale $\sigma_0/\beta$ ensemble average, which satisfies the desired inequalities.
\end{proof}
In particular this holds for ensembles of test functions with inward pointing gradient that are constant on $B(x_i,R)$.  Thus in this precise sense solutions of the Navier-Stokes equations that satisfy the assumptions exhibit an enstrophy cascade. 
 
\section{Conclusion}

Fix $R_0, T, K_1, K_2, C_0, C_1, M, B_T,\text{and } q$.  There will be an enstrophy cascade on $B(0,2R_0)\times(0,T)$ for any Leray solution with $E_0/P_0$ and $||\omega||_{L_t^2M^{2,q}_x}/P_0$ sufficiently small, as long as there is coherence of vorticity direction where $|\nabla u|>M$ and 
the macro-scale enstrophy before time $T$ never exceeds twice the local enstrophy at time $T$. 
The assumptions are consistent
with strong local anisotropy, spatial complexity and uniform locality of fully
developed 3D turbulent flows.

\section*{Acknowledgements}

I would like to thank my advisor, Zoran Grujic, for suggesting the problem.

\bibliographystyle{plain}

\bibliography{bib}

\end{document}